\def\D{\{0,1\}}
\newcommand{\nats}{{\mathbb N}}
\newcommand{\ints}{{\mathbb Z}}
\def\N{\mathbb{N}}
\def\Z{\mathbb{Z}}
\def\A{\mathbb{A}}
\def\R{\mathbb{R}}
\def\E{\mathbb{E}}
\def\NN{\mathcal{N}}
\def\1{\mathbf{1}}
\newcommand{\abp}{p^{\mbox{ab}}}
\newcommand{\mabp}{p^{*\mbox{ab}}}
\newtheorem{thm}{Theorem}
\newtheorem{theorem}{Theorem}[section]
\newtheorem{lemma}[theorem]{Lemma}
\newtheorem{corollary}[theorem]{Corollary}
\newtheorem{proposition}[theorem]{Proposition}
\newtheorem{remark}[theorem]{Remark}
\newtheorem{definition}[theorem]{Definition}
\theoremstyle{definition}
\begin{document}
\parindent=1em

\title{Abelian maximal pattern complexity of words}

\author[T. Kamae]{Teturo Kamae}
\address{Advanced Mathematical Institute, Osaka City University, Osaka, 558-8585 Japan}
\email{kamae@apost.plala.or.jp}

\author[S. Widmer]{Steven Widmer}
\address{Department of Mathematics
General Academics Building 435
1155 Union Circle \#311430
Denton, TX 76203-5017, USA}
\email{s.widmer1@gmail.com}

\author[L.Q. Zamboni]{Luca Q. Zamboni}
\address{Institut Camille Jordan\\
Universit\'e Claude Bernard Lyon 1\\
43 boulevard du 11 novembre 1918\\
F69622 Villeurbanne Cedex, France \indent and \indent Department of Mathematics and
                        Turku Centre for Computer Science,
                       University of Turku,
                        20014 Turku, Finland}
\email{lupastis@gmail.com}
\date{}
\subjclass[2000]{Primary 68R15}
\keywords{Abelian equivalence, complexity of words, periodicity}
\thanks{The third author is supported by a FiDiPro grant from the Academy of Finland.}

\begin{abstract}In this paper we study the maximal pattern complexity of infinite words up to Abelian equivalence. We compute a lower bound for the Abelian maximal pattern complexity of infinite words which are both recurrent and aperiodic by projection. We show that in the case of binary words, the bound is actually achieved and gives a characterization of recurrent aperiodic words.
\end{abstract}

\maketitle

\section{Introduction}

Let $\A$ be a finite non-empty set. We denote by $\A^*$, $\A^\nats$ and $\A^\ints$ respectively the set of finite words, the set of (right) infinite words, and the set of bi-infinite words over the alphabet $\A$. 
Given an infinite word
$\alpha= \alpha _0\alpha_1 \alpha_2\ldots \in \A^\nats$ with $\alpha_i\in \A$,
we denote by ${\mathcal F}_{\alpha}(n)$ the set of all {\it factors} of $\alpha$ of length $n,$ that is, the set of all finite words of the form $\alpha_{i}\alpha_{i+1}\cdots \alpha_{i+n-1}$ with $i\geq 0.$
We set \[p_{\alpha}(n)=\#\,({\mathcal F}_{\alpha}(n)).\] The function $p_{\alpha}:\nats \rightarrow \nats$ is called the {\it factor complexity function} of $\alpha.$

We recall that two words $u$ and $v$ in $\A^*$ are said to be {\it Abelian equivalent,} denoted $u\sim_{\mbox{ab}} v,$  if and only if $|u|_a=|v|_a$ for all $a\in \A,$ where $|u|_a$  denotes the number of occurrences of the letter $a$ in $u.$   It is readily verified that $\sim_{\mbox{ab}}$ defines an equivalence relation on $\A^*.$ 
We define \[{\mathcal F}^{\mbox{ab}}_{\alpha}(n)={\mathcal F}_{\alpha}(n)/\sim_{\mbox{ab}}\] and set \[\abp_{\alpha}(n)=\#\,({\mathcal 
F}^{\mbox{ab}}_{\alpha}(n)).\]
The function
$\abp_{\alpha} :\nats \rightarrow \nats$ which counts the number of pairwise non Abelian equivalent factors of $\alpha$ of length $n$ is called the {\it Abelian complexity} of $\alpha$ (see \cite{RSZ}). 

There are a number of similarities  between the usual factor complexity of an infinite word and its Abelian counterpart.  For instance, both may be used to characterize periodic bi-infinite words (see \cite{MorHed1940} and \cite{CovHed}). 
A word $\alpha$  is \emph{periodic} if there exists a positive integer $p$ such that 
$\alpha_{i+p} = \alpha_i$ for all indices $i$, and it is \emph{ultimately periodic} if $\alpha_{i+p} = \alpha_i$ for all sufficiently large $i$.
An infinite word is \emph{aperiodic} if it is not ultimately periodic.
The factor complexity function also provides a characterization of ultimately periodic words. On the other hand,  Abelian complexity does not yield such a characterization. Indeed, both Sturmian words and the ultimately periodic word  $01^\infty = 0111\cdots$  have the same, constant $2$, Abelian complexity.

\noindent As another example, both complexity functions give a characterization of  \textit{Sturmian} words amongst all aperiodic words:

\begin{thm}\label{SturmianChar}
Let $\alpha$ be an aperiodic infinite word over the alphabet $\{0,1\}$. The following conditions are equivalent:
\begin{itemize}
\item The word $\alpha$ is balanced, that is, {\it Sturmian}.
\item (M. Morse, G.A. Hedlund, \cite{MorHed1940}). The word $\alpha$ satisfies $p_{\alpha}(n_0) = n+1$ for all $n \geq 0$. 
\item (E.M. Coven, G.A. Hedlund, \cite{CovHed}). The word $\alpha$ satisfies $p_{\alpha}^{\mbox{\rm ab}}(n)=2$ for all $n\geq 1$. 
\end{itemize}
\end{thm}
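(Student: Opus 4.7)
The plan is to establish the two independent equivalences (1)$\Leftrightarrow$(3) (Coven--Hedlund) and (1)$\Leftrightarrow$(2) (Morse--Hedlund). Aperiodicity is essential in both; without it the word $01^\infty$ is balanced and has $\abp_\alpha(n) = 2$ for all $n \geq 1$, showing that (3) alone does not characterize Sturmian words, and analogous trivial counterexamples exist for (2).

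I would treat (1)$\Leftrightarrow$(3) first, as it is cleanest on a binary alphabet. Since $|u|_0 = n - |u|_1$ for any factor $u$ of length $n$, the Abelian class of $u$ is determined by $|u|_1$, and so $\abp_\alpha(n)$ counts the number of distinct values of $|u|_1$ as $u$ ranges over $\mathcal{F}_\alpha(n)$. Sliding the length-$n$ window by one position changes $|u|_1$ by at most $1$, so the set of attained values is an interval of consecutive integers. Balance bounds the diameter of this interval by $1$, giving $\abp_\alpha(n) \leq 2$; aperiodicity prevents it from being a singleton, because if $|u|_1$ were constant on length-$n$ factors, the identity $|\alpha_{i+1} \cdots \alpha_{i+n}|_1 - |\alpha_i \cdots \alpha_{i+n-1}|_1 = 0$ would force $\alpha_{i+n} = \alpha_i$ for every $i$, making $\alpha$ periodic. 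Conversely, $\abp_\alpha(n) = 2$ means the interval of attained $|u|_1$-values has length exactly $1$, which is balance.

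For (1)$\Leftrightarrow$(2), the lower bound $p_\alpha(n) \geq n+1$ for aperiodic $\alpha$ is Morse--Hedlund's classical argument: if $p_\alpha(n_0) \leq n_0$, then some factor of length $n_0$ admits a unique right-extension; pushing this forward yields eventual periodicity. For the reverse direction, I would use the identity $p_\alpha(n+1) - p_\alpha(n) = \#\{\text{right-special factors of length } n\}$, where a factor $w$ is right-special if both $w0$ and $w1$ occur in $\alpha$. Assuming balance, I would show that there is a unique right-special factor of each length: two distinct right-special factors $u \neq v$ of length $n$ would, after suitable extensions, produce factors of a common length differing in $|\cdot|_1$ by at least $2$, contradicting balance. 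Together with $p_\alpha(1) = 2$ (from aperiodicity on a binary alphabet), this yields $p_\alpha(n) = n+1$ by induction. The converse $p_\alpha(n) = n+1 \Rightarrow$ balance follows either from the same right-special analysis or, more economically, by chaining through (1)$\Leftrightarrow$(3) once established.

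The main obstacle is the structural lemma on uniqueness of right-special factors in a balanced aperiodic binary word. The counting consequences and the sliding-window arguments for (1)$\Leftrightarrow$(3) are routine, but the combinatorial extraction of a $|\cdot|_1$-discrepancy of at least $2$ from two distinct right-special factors of the same length requires a careful argument, typically via a synchronization/common-extension trick between the two competing bispecial configurations. Everything else reduces to bookkeeping once that rigidity is secured.
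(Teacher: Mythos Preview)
The paper does not give its own proof of this theorem. It is stated in the introduction as a classical characterization, with the equivalence (1)$\Leftrightarrow$(2) attributed to Morse--Hedlund \cite{MorHed1940} and (1)$\Leftrightarrow$(3) to Coven--Hedlund \cite{CovHed}; the paper then moves on to its actual subject (Abelian maximal pattern complexity) without revisiting the proof. So there is nothing to compare your approach against within the paper itself.

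That said, your outline is the standard one and is essentially correct. The argument for (1)$\Leftrightarrow$(3) is complete as written. For (1)$\Rightarrow$(2) your plan via uniqueness of the right-special factor is the classical route, and you correctly flag the combinatorial lemma (two distinct right-special factors force an imbalance) as the only nontrivial step. One small remark: your suggestion that (2)$\Rightarrow$(1) can be obtained ``more economically, by chaining through (1)$\Leftrightarrow$(3)'' does not work as stated, since (1)$\Leftrightarrow$(3) gives you no direct link between $p_\alpha(n)=n+1$ and $p_\alpha^{\mathrm{ab}}(n)=2$; you still need an independent argument (typically the contrapositive: unbalanced implies the existence of factors $0w0$ and $1w1$, hence two right-special factors of the same length, hence $p_\alpha(n)>n+1$ for some $n$).
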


In \cite{KZ}, the first and third authors introduced a different notion of the complexity of an infinite word called the maximal pattern complexity: 

For each positive integer $k,$ let $\Sigma_k(\N)$ denote the set of all $k$-element subsets of $\N.$ 
An element $S=\{s_1<s_2<\cdots<s_k\}\in \Sigma_k(\N)$  will be called a $k$-{\it pattern}. 
We put \[\alpha[S]:=\alpha(s_1)\alpha(s_2)\cdots\alpha(s_k)\in\A^k.\] 
For each $n\in \N,$ the word $\alpha[n+S]$ is called a 
$S$-{\it factor} of $\alpha$, where  
$n+S:=\{n+s_1,n+s_2,\cdots,n+s_k\}$. We denote by ${\mathcal F}_\alpha(S)$ the set of all $S$-factors of $\alpha.$ 
We define the {\it pattern complexity} $p_\alpha(S)$ by
\[p_\alpha(S)=\#\,{\mathcal F}_\alpha(S)\]
and the {\it maximal pattern complexity} $p_\alpha^*(k)$ by
\[p_\alpha^*(k)=\sup_{S\in \Sigma_k(\N)} p_\alpha(S).\] 

In \cite{KZ} the authors show that maximal pattern complexity also gives a characterization of ultimately periodic words :

\begin{thm}\label{patternsturm} Let $\alpha \in \A^\nats.$ Then the following are equivalent
\begin{enumerate}
\item $ \alpha$ is eventually periodic
\item $p_\alpha^*(k)$ is uniformly bounded in $k$
\item $p_\alpha^*(k)<2k$ for some positive integer $k.$
\end{enumerate}
\end{thm}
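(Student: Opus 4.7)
The implications $(1) \Rightarrow (2) \Rightarrow (3)$ are the easy directions. For $(1) \Rightarrow (2)$, suppose $\alpha$ is eventually periodic with pre-period $N$ and period $p$. For any $k$-pattern $S$ and any $n \geq N$, the word $\alpha[n+S]$ depends only on the residue $n \bmod p$, so $\alpha$ contributes at most $p$ distinct $S$-factors at positions $\geq N$; together with the at most $N$ factors at smaller positions, this gives $p_\alpha(S) \leq N+p$ uniformly in $S$, hence $p_\alpha^*(k) \leq N+p$ uniformly in $k$. The implication $(2) \Rightarrow (3)$ is immediate by taking $k$ larger than half the uniform bound.

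The substantive direction is $(3) \Rightarrow (1)$, which I would establish by proving its contrapositive: if $\alpha$ is aperiodic, then $p_\alpha^*(k) \geq 2k$ for every $k \geq 1$. The plan is an induction on $k$. The base case $k=1$ is immediate: an aperiodic word must have at least two distinct letters appearing infinitely often, for otherwise $\alpha$ would be eventually constant. For the inductive step, given a $k$-pattern $S$ with $p_\alpha(S) \geq 2k$, the goal is to find $t \notin S$ such that the $(k+1)$-pattern $S' = S \cup \{t\}$ satisfies $p_\alpha(S') \geq p_\alpha(S) + 2$. Since adding one position increases the count by the total branching $\sum_{w \in \mathcal{F}_\alpha(S)}(\#\{\text{extensions of }w\}-1)$, this amounts to arranging that, at some common offset $t$, at least two $S$-factors each admit more than one extension.

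The main obstacle is precisely this double-branching step, which is where aperiodicity must be exploited. My approach would be by contradiction: assume that for every sufficiently large $t$ the pattern $S \cup \{t\}$ contributes at most one extra extension beyond $\mathcal{F}_\alpha(S)$. Then for all but finitely many offsets $t$, knowing $\alpha[n+S]$ determines $\alpha(n+t)$ up to a single exceptional class of positions. Since $\mathcal{F}_\alpha(S)$ is finite, the sequence of ``states'' $(\alpha[n+S])_{n\geq 0}$ then takes values in a finite set with an almost-deterministic transition rule as $n$ increases; a pigeonhole/compactness argument on this induced dynamics produces an eventual period, contradicting the aperiodicity of $\alpha$. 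The delicate point I anticipate wrestling with is showing that this almost-determinism yields a genuine period for $\alpha$ itself, and not merely for some projection of it — in particular, handling the single exceptional branching class carefully so that the near-determinism actually forces a recurring word, rather than a drifting one, is where the real work lies.
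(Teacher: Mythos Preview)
The paper does not supply its own proof of this theorem: it is quoted from \cite{KZ} as background, so there is nothing in the present paper to compare your argument against directly.

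That said, your outline matches the strategy of the original \cite{KZ} proof. The implications $(1)\Rightarrow(2)\Rightarrow(3)$ are handled correctly. For $(3)\Rightarrow(1)$ your inductive plan---start from a $k$-pattern $S$ with $p_\alpha(S)\ge 2k$ and find $t$ so that $S\cup\{t\}$ gains at least two factors---is exactly the mechanism used in \cite{KZ}. The place where your sketch is not yet a proof is precisely the part you flag: from the hypothesis ``for all large $t$, at most one $S$-factor branches at offset $t$'' you assert that a pigeonhole/compactness argument on the state sequence $(\alpha[n+S])_n$ forces eventual periodicity, but you do not actually carry this out. The difficulty is real: knowing that all but one $w\in\mathcal F_\alpha(S)$ determine $\alpha(n+t)$ does not by itself give a finite-state deterministic transition on $n\mapsto n+1$, because which $w$ is the exceptional one may vary with $t$. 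In \cite{KZ} this is handled by a more careful combinatorial argument (fixing a single large gap and analyzing how the unique branching factor propagates), and an equivalent device would be needed here. So your proposal has the right architecture but the contradiction step, as written, is a genuine gap rather than a routine detail.
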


In other words, $\alpha$ is aperiodic if and only if $p_\alpha^*(k)\geq 2k$ for each positive integer $k.$ We say $\alpha \in \A^\nats$ is {\it pattern Sturmian} if $p_\alpha^*(k)=2k$ for each positive integer $k.$
Two types of recurrent pattern Sturmian words are known:  rotation words (see below) and a family of `simple' Toeplitz words (see \cite{KZ}). Unfortunately, to date  there is no known
classification of  recurrent pattern Sturmian words (as in the case of Theorem~\ref{SturmianChar}).

The connection between items (1) and (3) in Theorem~\ref{patternsturm} was generalized by the first author and R. Hui in \cite{L}. We say $\alpha \in \A^\nats$  is  {\it periodic by projection} 
if there exists a set
$\emptyset\neq B\subsetneqq\A$, such that 
\[
\1_B(\alpha):=\1_B(\alpha(0))\1_B(\alpha(1))\1_B(\alpha(2))\cdots\in\D^\N.
\] is eventually periodic (where $\1_B$ denotes the characteristic function of  $B).$ We say $\alpha$ is {\it aperiodic by projection} if $\alpha$ is not periodic by projection. Then:

 \begin{thm} Let $\#\A=r \geq 2,$ and $\alpha \in \A^\nats$ be aperiodic by projection. Then   $p_\alpha^*(k)\geq r k$ for each
 positive integer $k.$  
 \end{thm}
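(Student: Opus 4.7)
The plan is to prove the theorem by induction on the alphabet size $r = \#\A$. The base case $r = 2$ is immediate from Theorem~\ref{patternsturm}, since over a two-letter alphabet ``aperiodic by projection'' coincides with ``aperiodic''.

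For the inductive step, let $r \geq 3$ and assume the conclusion for all alphabets of size $r - 1$. Fix distinct letters $a, c \in \A$ and define the projection $\pi : \A \to \A \setminus \{a\}$ by $\pi(a) = c$ and $\pi(x) = x$ for $x \neq a$; set $\tilde\alpha = \pi \circ \alpha$. I first check that $\tilde\alpha$ is aperiodic by projection over $\A \setminus \{a\}$: for any $\emptyset \neq B' \subsetneq \A \setminus \{a\}$, the word $\1_{B'}(\tilde\alpha)$ equals $\1_B(\alpha)$ with $B = B' \cup \{a\}$ when $c \in B'$ and $B = B'$ otherwise; in either case $B$ is a non-empty proper subset of $\A$, so $\1_B(\alpha)$ is aperiodic by hypothesis. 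By the inductive hypothesis there is a $k$-pattern $S$ with $p_{\tilde\alpha}(S) \geq (r-1)k$, and since $\pi$ applied coordinate-wise surjects the $S$-factors of $\alpha$ onto those of $\tilde\alpha$, we obtain $p_\alpha(S) \geq (r-1)k$ for free.

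To upgrade this to $rk$, I invoke the aperiodicity of $\beta = \1_{\{a\}}(\alpha)$, which by Theorem~\ref{patternsturm} gives $p^*_\beta(k) \geq 2k$. The surplus $p_\alpha(S) - p_{\tilde\alpha}(S)$ equals the total number of ``collisions'' arising from lifting a $c$ in a $\tilde\alpha$-factor to either $c$ or $a$ in $\alpha$. By refining the inductive statement so that $S$ simultaneously realizes the bound on $\tilde\alpha$ and the bound on $\beta$, the $2k$ distinct binary $\beta$-patterns impose at least $k$ collisions among the $(r-1)k$ distinct $\tilde\alpha$-factors, producing the $k$ extra $\alpha$-$S$-factors needed to reach $rk$.

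The main obstacle is precisely this coordination step. The induction on $\tilde\alpha$ and Theorem~\ref{patternsturm} on $\beta$ produce two \emph{a priori} unrelated $k$-patterns, and one must strengthen the inductive statement so that a single pattern $S$ witnesses both lower bounds at once; this is handled by shifting the positions of $S$, using translation invariance of pattern complexity, into windows where $\beta$ already exhibits $2k$ distinct binary $k$-patterns. Naive small examples show that the bare bounds $p_{\tilde\alpha}(S) \geq (r-1)k$ and $p_\beta(S) \geq 2k$ alone do not \emph{a priori} imply $p_\alpha(S) \geq rk$; however, each such bad example would force some $\1_B(\alpha)$ to be eventually periodic, contradicting aperiodicity by projection. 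Turning this observation into the required collision count is the technical heart of the argument.
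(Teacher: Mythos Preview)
First, a framing remark: the paper does \emph{not} prove this theorem. It is quoted in the introduction as a known result of Kamae and Rao~\cite{L}, and no argument is given here. So there is no ``paper's own proof'' to compare against; I can only assess your proposal on its merits.

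Your reduction of the base case to Theorem~\ref{patternsturm} is fine, and the verification that the merged word $\tilde\alpha=\pi\circ\alpha$ remains aperiodic by projection over $\A\setminus\{a\}$ is correct and clean. The difficulty is exactly where you locate it yourself, and you do not actually resolve it.

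Concretely: the inductive hypothesis furnishes a pattern $S_1$ with $p_{\tilde\alpha}(S_1)\ge(r-1)k$, and Theorem~\ref{patternsturm} furnishes a (generally different) pattern $S_2$ with $p_\beta(S_2)\ge 2k$ for $\beta=\1_{\{a\}}(\alpha)$. Your ``coordination'' proposal is to shift $S_1$ so that both bounds hold for a common $S$; but $p_\alpha(S)$, $p_{\tilde\alpha}(S)$ and $p_\beta(S)$ already depend only on the set of differences of $S$, so translating $S$ changes nothing. There is no mechanism in your sketch that produces a single $S$ witnessing both inequalities, and the inductive statement as written (``there exists some $S$'') is too weak to be iterated in this way. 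Even granting a common $S$, you yourself note that the two bounds do not formally combine to give $p_\alpha(S)\ge rk$: having surjections from $\mathcal F_\alpha(S)$ onto a set of size $(r-1)k$ and onto a set of size $2k$ does not force $\#\mathcal F_\alpha(S)\ge rk$. Your final sentence (``Turning this observation into the required collision count is the technical heart of the argument'') is an acknowledgement that the proof is missing its main step, not a proof of it.

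For comparison, the machinery the present paper develops for the Abelian analogue (Proposition~\ref{key}: pass via Ramsey to a $k$-superstationary set satisfying the prolongation property~(1) and the strong-connectedness property~(2)) is of a quite different flavor and does not proceed by induction on $r$. The original Kamae--Rao argument in~\cite{L} is along similar structural lines. If you want to salvage the inductive approach, you would need to substantially strengthen the inductive hypothesis so that it controls \emph{all} sufficiently spread patterns simultaneously, not just one; as stated, the scheme does not close.
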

 
 \noindent In other words, low pattern complexity (relative to the size of the alphabet) implies periodic by projection.  Notice that if $\#\A=2,$ then $\alpha$ is periodic by projection if and only if
 $\alpha$ is eventually periodic.\\
 
 In this paper we introduce and study an Abelian analogue of maximal pattern complexity:
 Given a $k$-pattern $S\in \Sigma_k(\N),$ we define  \[{\mathcal F}^{\mbox{ab}}_{\alpha}(S)={\mathcal F}_{\alpha}(S)/\sim_{\mbox{ab}}\] and the associated  {\it Abelian pattern complexity}
 \[p_\alpha ^{\mbox{ab}}(S)=\#\,{\mathcal F}^{\mbox{ab}}_{\alpha}(S)\]
 which counts the number of pairwise non Abelian equivalent $S$-factors of $\alpha.$ We define the
 {\it Abelian maximal pattern complexity} 
 \[\mabp_{\alpha}(k)=\sup_{S\in \Sigma_k(\N)}p_\alpha ^{\mbox{ab}}(S).\]

 It is clear that for each positive integer $k$ and for each pattern $S\in \Sigma_k(\N)$ we have 
\[p_\alpha^{\mbox{ab}}(S)\leq p_\alpha(S)\,\,\mbox{and }\,\,\mabp_{\alpha} (k)\leq p_\alpha^*(k).\] 
In this paper we show :

\begin{thm}\label{main}
Let $\#\A=r\geq 2$ and $\alpha\in\A^\N$ be recurrent and aperiodic by projection.  
Then for each positive integer $k$ we have
\[
p_\alpha^{*\mbox{\rm ab}}(k)\geq (r-1)k+1
\]
In case $r=2$ equality always holds. Moreover for $k=2$ and general $r,$ 
there exists $\alpha$ satisfying the equality. 
\end{thm}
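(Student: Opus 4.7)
The plan is to prove the three claims of the theorem in order. The upper bound when $r=2$ is immediate: any binary word of length $k$ has Parikh vector determined by the single integer $|u|_1\in\{0,1,\dots,k\}$, so $p_\alpha^{*\mbox{ab}}(k)\leq k+1$, and together with the general lower bound this yields equality.

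For the main lower bound $p_\alpha^{*\mbox{ab}}(k)\geq (r-1)k+1$ I would argue by induction on $r$, with base case $r=2$. The base case amounts to showing that every aperiodic recurrent binary word $\beta$ admits a pattern $S\in\Sigma_k(\N)$ whose $S$-factors realize all $k+1$ Parikh counts in $\{0,1,\dots,k\}$. The delicate sub-case is Sturmian $\beta$, since consecutive-window patterns give only two counts; modelling $\beta$ as $\1_I(n\theta+\phi)$ for an irrational $\theta$, I would use Weyl equidistribution to choose $s_1<\cdots<s_{k-1}$ so that the fractional parts $\{s_i\theta\}$ cluster in an arbitrarily small arc of the circle, and then slide the base point $n\theta+\phi$ across the boundary of $I$ so that the indicators $\1_I(n\theta+\phi+s_i\theta)$ toggle on one at a time, sweeping out every count in $\{0,1,\dots,k\}$. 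In the non-Sturmian case, the Coven--Hedlund theorem forces the ordinary Abelian complexity to exceed $2$ at some length, and this additional variability, propagated via recurrence and the sliding-window ``unit step'', should allow a consecutive pattern itself to realize a long enough interval of Parikh counts.

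For the inductive step, fix a letter $a\in\A$ and consider $\beta=\1_{\{a\}}(\alpha)$, which is aperiodic recurrent binary because $\alpha$ is aperiodic by projection. The base case yields a pattern $S$ for which $|\alpha[n+S]|_a$ attains every value in $\{0,1,\dots,k\}$. On the fibre $N_0=\{n:|\alpha[n+S]|_a=0\}$ the $S$-factors of $\alpha$ land in $(\A\setminus\{a\})^k$, and if one can cut down to an appropriately induced subshift over the alphabet $\A\setminus\{a\}$ inheriting the hypotheses of the theorem, the inductive hypothesis produces $(r-2)k+1$ distinct Abelian classes within $N_0$. Walking monotonically through the remaining fibres $N_1,\dots,N_k$ along a single sliding trajectory contributes the remaining $k$ Parikh vectors, giving $(r-2)k+1+k=(r-1)k+1$ classes for a single pattern. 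The main obstacle is precisely this ``descent to a fibre'' step, since $\alpha$ restricted to the sparse index set $N_0$ is not itself a word and the induction has to be phrased dynamically; a cleaner alternative that side-steps induction is to extract the required $(r-1)k+1$ Abelian classes directly from a single pattern realising the Kamae--Hui bound $p_\alpha^*(k)\geq rk$, arguing combinatorially that among the $rk$ distinct $k$-tuples $\alpha[n+S]\in\A^k$ so produced, at least $(r-1)k+1$ distinct Parikh vectors must appear.

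Finally, for the last assertion, when $k=2$ and $r\geq 2$, I would exhibit $\alpha$ as the coding of an irrational circle rotation $n\mapsto n\theta\pmod 1$ by a partition of $[0,1)$ into $r$ arcs whose endpoints are rationally independent of $\theta$. Such $\alpha$ is recurrent and aperiodic by projection, and a direct analysis on the circle of the pairs $(\alpha(n),\alpha(n+N))$ for varying $n$ and $N$ should show that, for generic arc lengths, exactly $2r-1$ distinct Parikh classes at $k=2$ are achieved, realising equality in the lower bound.
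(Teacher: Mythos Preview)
Your proposal has several genuine gaps.

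\textbf{The inductive/alternative arguments for the lower bound both fail.} You yourself flag that the induction step cannot be carried out as stated, since restricting to the fibre $N_0$ does not produce a word to which the hypothesis applies. Your ``cleaner alternative'' is then offered as a rescue, but the purely combinatorial claim you need is false: it is not true that any $rk$ distinct elements of $\A^k$ must represent at least $(r-1)k+1$ Parikh classes. Already for $r=3$, $k=2$ the six words $01,10,02,20,12,21$ are pairwise distinct yet realise only three Parikh vectors, well short of $(r-1)k+1=5$. So nothing prevents a pattern witnessing the Kamae--Hui bound from producing far fewer Abelian classes than required. The paper avoids this by not working with an arbitrary optimal pattern at all: it first passes, via Ramsey's theorem and a recurrence argument, to a $(k{+}1)$-superstationary set $\Omega\subset\overline{O}(\alpha)[\NN]$ with the extra closure property that $\omega(0)\cdots\omega(i)^\infty\in\Omega$ whenever $\omega\in\Omega$. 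From aperiodicity by projection one gets a strongly connected digraph on $\A$, and the closure property then forces, for a connected set of $r-1$ unordered pairs $\{a,b\}$, the full ladder $a^hb^{k-h}$ ($0\le h\le k$) to lie in $\Omega_k$; an inclusion--exclusion on the constant words gives $(r-1)k+1$. This structural control over \emph{which} factors occur is exactly what your combinatorial shortcut lacks.

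\textbf{The $k=2$ example is wrong.} Rotation words do \emph{not} achieve equality: the paper proves (and your own Sturmian argument essentially shows for $r=2$) that for a rotation word on $r$ letters one has $p_\alpha^{*\mathrm{ab}}(k)=rk$, so at $k=2$ they give $2r$, not $2r-1$. The paper's witness for $p_\alpha^{*\mathrm{ab}}(2)=2r-1$ is instead a Toeplitz word (e.g.\ $\alpha(n)=\tau(n+1)\bmod r$ with $\tau$ the $2$-adic valuation), where for any $2$-pattern $\{s<t\}$ one coordinate of $\alpha[n+S]$ is pinned to a fixed letter determined by $\tau(t-s)$, collapsing the possible Parikh vectors to at most $2r-1$.
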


For example, if $\alpha\in\D^\N$ is a Sturmian word and $S\in \Sigma_k(\N)$ is a $k$-block pattern, i.e., $S=\{0,1,2,\ldots ,k-1\},$  then we have $p_{\alpha}^{\mbox {ab}}(S)=2$ (since $\alpha$ is balanced) while $p_\alpha(S)=k+1.$ Since  $\alpha$ is both recurrent and aperiodic, it follows from the above theorem that the Abelian maximal pattern complexity $\mabp_{\alpha}(k)$ takes the maximum value $k+1$ for each positive integer $k.$ Moreover, all recurrent pattern Sturmian words share this property.

For a rotation word $\alpha\in\A^\nats$ with $r=\#\A\ge 3$, we show that $\mabp_{\alpha}(k)=rk$ for each positive integer $k$ (see Theorem~\ref{rot}). Since $p_\alpha^*(k)=rk$, the abelianization doesn't decrease the complexity in this case. On the other hand, in the proof of Theorem~\ref{main}, we show that $\mabp_{\alpha}(2)=2r-1$ for any Toeplitz word $\alpha\in\A^\N$ with $\#\A=2.$

We define two classes of words with $\A=\{0,1,\cdots,r-1\}$ and $r\ge 2$. 
Let $\theta$ be an irrational number and $c_0<c_1<\cdots<c_{r-1}<c_r$ be real numbers such that $c_r=c_0+1$. Define $\alpha\in\A^k$ by $\alpha(n)=i$ if $n\theta\in[c_i,c_{i+1})\pmod{1}$ for any $i\in\A$ and $n\in\N$. We call such $\alpha$ a {\it rotation word}. 
Let $\Z_2$ be the 2-adic compactification of $\Z$ and $\gamma\in\Z_2$. For $n\in\Z_2$, let $\tau(n)\in\N\cup\{\infty\}$ be the superimum of $k\in\N$ such that $2^k$ devides $n$. Let $B_i~(i\in\A)$ be infinite subsets of $\N\cup\{\infty\}$ such that $B_i\cap B_j=\emptyset$ for any $i,j\in\A$ with $i\ne j$ and $\cup_{i\in\A} B_i=\N\cup\{\infty\}$. Define $\alpha\in\A^\N$ by $\alpha(n)=i$ if $\tau(n-\gamma)\in B_i$ for any $i\in\A$ and $n\in\N.$ 
We call such $\alpha$ a {\it Toeplitz word}.

We do not know whether the inequality in Theorem~\ref{main} is tight when $r\ge 3$ and $k\ge 3$. 

\section{Background \& notation}
Given a finite non-empty set $\A,$ we  endow $\A^\nats$ with the topology generated by the metric
\[d(x, y)=\frac 1{2^n}\,\,\mbox{where} \,\, n=\inf\{k :x_k\neq y_k\}\] 
whenever $x=(x_n)_{n\in \nats}$ and $y=(y_n)_{n\in \nats}$ are two elements of $\A^\nats.$
For $\omega \in \A^\nats,$ let $\overline{O}(\omega)$ denote
the closure  of the orbit $O(\omega):=\{T^n\omega :\,~n\in\N\}$ of $\omega$ with respect 
to the shift $T$ on $\A^\N$, where $(T\omega)(n)=\omega(n+1)~(n\in\N)$.

Given a finite word $u =a_1a_2\ldots a_n$ with $n \geq 1$ and $a_i \in A,$ we denote the length $n$ of $u$ by $|u|.$ For each $a\in A,$ we let $|u|_a$  denote the number of occurrences of the letter $a$ in $u.$

For each $u\in A^*,$ we denote by
$\Psi(u)$ the {\it Parikh vector} or {\it abelianization} of $u,$ that is the vector indexed by $\A$
\[\Psi(u)=(|u|_a)_{a\in \A}.\]
Given $\Xi\subset\A^*$, we set  
\[\Xi^{\mbox{ab}}:=\Xi/ \sim_{\mbox{ab}}\] and  
\[\Psi(\Xi):=\{\Psi(\xi)\,|\,~\xi\in\Xi\}.\]
There is an obvious bijection between the sets $\Xi^{\mbox{ab}}$ and $\Psi(\Xi)$ where one identifies
the Abelian class of an element $u\in \A^*$ with its Parikh vector $\Psi(u).$

Given a nonempty set $\Omega\subset\A^\N,$ $S\in \Sigma_k(\N)$  and an infinite set 
$\NN\subset\N$ we put \[\Omega[S]:=\{\omega[S]\,|\,\omega\in\Omega\}\subset\A^k\] and
\[\Omega[\NN]:=\{\omega[\NN]\,|\,\omega\in\Omega\}\subset\A^\N\] where
$\omega[\NN]\in\A^\N$ is defined by $\omega[\NN](n)=\omega(N_n)~(n\in\N).$

Analogously we can define the maximal pattern complexity of $\Omega$ by
\[p_\Omega^*(k)=\sup_{S\in \Sigma_k(\N)} p_\Omega(S)\] 
where
\[p_\Omega(S)=\#\,\Omega[S]\]
and the Abelian maximal pattern complexity of $\Omega$
 \[\mabp_{\Omega}(k)=\sup_{S\in \Sigma_k(\N)}p_\Omega ^{\mbox{ab}}(S)\]
 where
 
 \[p_\Omega ^{\mbox{ab}}(S)=\#\,\Omega[S]^{\mbox{ab}}.\]

\section{Superstationary sets \& Ramsey's Infinitary Theorem}

\begin{lemma}\label{*} Let $\omega \in \A^\nats$ be a recurrent infinite word.  Then there exists an infinite set $\NN=\{N_0<N_1<N_2<\cdots\}\subset\N$ satisfying the following condition:
\[(*)\,\,\,\,\forall i\geq 0,\,\forall k\geq 0\,\,\,  \omega_{i+N_0}\omega_{i+N_1}\cdots \omega_{i+N_{k-1}}\omega_{i+N_k}^\infty\in \overline{O}(\omega) [\NN]\] 
\end{lemma}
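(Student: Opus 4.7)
My plan has three components: a compactness reformulation of $(*)$, an inductive Ramsey-style construction of $\NN$, and a case-split verification.

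First, I would use compactness to reformulate $(*)$. Since $\overline{O}(\omega)$ is compact (as a closed subset of $\A^\N$) and the restriction map $\eta\mapsto\eta[\NN]$ is continuous, the set $\overline{O}(\omega)[\NN]$ is closed in $\A^\N$. Consequently $(*)$ is equivalent to the following finite-truncation condition: for every $i,k\geq 0$ and every $J\geq k$, there exists $m\in\N$ with
\[
\omega(m+N_j)=\omega(i+N_j)\ \text{for}\ 0\leq j<k,\quad \omega(m+N_j)=\omega(i+N_k)\ \text{for}\ k\leq j\leq J.
\]

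Next, I would build $\NN$ inductively via iterated application of the infinite pigeonhole principle (the one-dimensional case of Ramsey's theorem). Initialize $M_0=\N$ and use the convention $N_{-1}=-1$. At stage $k$, given $N_0<\cdots<N_{k-1}$ and an infinite candidate set $M_k\subset\N$ with $\min M_k>N_{k-1}$, color each $n\in M_k$ by the finite datum $\phi_k(n)=(\omega(n+t))_{0\leq t\leq N_{k-1}}\in\A^{N_{k-1}+1}$. Extract an infinite monochromatic class $C_k\subset M_k$, and set $N_k:=\min C_k$ and $M_{k+1}:=C_k\setminus\{N_k\}$. Since every $N_j$ with $j\geq k$ will lie in $C_k$ by construction, monochromaticity yields the key structural property
\[
(\dagger)\qquad \omega(N_j+t)=\omega(N_k+t)\quad\text{for all}\ j\geq k\ \text{and all}\ t\in\{0,1,\ldots,N_{k-1}\}.
\]

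Finally, I would verify $(*)$ by splitting on the pair $(i,k)$. When $i\leq N_{k-1}$, substituting $t=i$ in $(\dagger)$ gives $\omega(i+N_j)=\omega(i+N_k)$ for all $j\geq k$, so $\eta=T^i\omega\in\overline{O}(\omega)$ already realises the required word on $\NN$. The harder case is $i>N_{k-1}$, where the tail constancy guaranteed by $(\dagger)$ only begins at a later stage $k^*$ with $N_{k^*-1}\geq i$, whereas $(*)$ requires it from position $k$ onwards. Here I would construct $\eta$ as a limit of shifts: using the recurrence of $\omega$, select $m$ large enough that $T^{m+i}\omega$ agrees with $T^i\omega$ on $\{N_0,\ldots,N_k\}$; then apply $(\dagger)$ at a later stage $k^*$ (chosen so that $m+i\leq N_{k^*-1}$) to force the tail of $T^{m+i}\omega[\NN]$ to be eventually constant, invoking recurrence once more to match this constant to $\omega(i+N_k)$. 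The principal obstacle is precisely this second case: coordinating the large-$m$ input from recurrence with a sufficiently late Ramsey stage $k^*$ at which $(\dagger)$ applies requires careful diagonal bookkeeping, possibly by refining each coloring $\phi_k$ to incorporate a growing finite set of shifts, so that every pair $(i,k)$ is eventually covered and the compactness criterion above is met.
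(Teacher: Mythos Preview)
Your compactness reduction and the pigeonhole construction yielding $(\dagger)$ are fine, and case~1 ($i\le N_{k-1}$) is correct. The gap is precisely where you place it, in case~2, and it is a real gap rather than bookkeeping. The construction of $\NN$ via pigeonhole does not use the recurrence of $\omega$ at all; it produces an $\NN$ satisfying $(\dagger)$ for \emph{every} infinite word. Since $(*)$ can fail for non-recurrent words (cf.\ the example $10^{3}10^{3^2}10^{3^3}\cdots$ later in the paper), recurrence must enter the construction of $\NN$, not merely the verification. Your sketch for case~2 tries to inject it afterwards: pick a large recurrence shift $m$ matching $T^i\omega$ on $\{N_0,\dots,N_k\}$, then invoke $(\dagger)$ at some late stage $k^*$ with $m+i\le N_{k^*-1}$. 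But $(\dagger)$ at stage $k^*$ only forces $T^{m+i}\omega[\NN]$ to be constant from position $k^*$ on; the entries at positions $k+1,\dots,k^*-1$ remain uncontrolled, and nothing pins the eventual constant value to $\omega(i+N_k)$. The vague fix of ``refining $\phi_k$ by a growing set of shifts'' does not close this, since the shift $m$ you need depends on $(i,k)$ and can be arbitrarily large compared with any data fixed at stage $k$.

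The paper's argument sidesteps the difficulty by using recurrence \emph{in} the inductive choice of $N_{k+1}$. Having built $N_0<\cdots<N_k$, one first picks $L$ so large that every word in $\overline{O}(\omega)[\{N_0,\dots,N_k\}]$ is already of the form $\omega_{i+N_0}\cdots\omega_{i+N_k}$ for some $i\le L-N_k$; then recurrence of the length-$(L{+}1)$ prefix of $\omega$ furnishes $N_{k+1}>N_k$ with $\omega_{i+N_k}=\omega_{i+N_{k+1}}$ for every such $i$. This immediately gives the finite extension property: whenever $u_0\cdots u_j\in\overline{O}(\omega)[\{N_0,\dots,N_j\}]$ with $j\le k$, one has $u_0\cdots u_j^{\,k-j+2}\in\overline{O}(\omega)[\{N_0,\dots,N_{k+1}\}]$. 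Iterating and passing to the limit via your compactness argument yields $(*)$ for all $i$ at once, with no case split on the size of $i$.
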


\begin{proof} We show by induction on $k$ that for each $k\geq 0$ there exists natural numbers $N_0<N_1<\cdots <N_k$ such that for each $j\leq k,$ if $u_0u_1\cdots u_j \in \overline{O}(\omega) [\{N_0,N_1,\ldots,N_j\}]$ then
$u_0u_1\cdots u_j^{k-j+1}\in \overline{O}(\omega) [\{N_0,N_1,\ldots,N_k\}].$
Clearly we can take for $N_0$ any natural number in $\N.$ Next suppose we have chosen natural numbers 
$N_0<N_1<\cdots <N_k$ with the required property. Fix a positive integer $L$ such that if $u\in \overline{O}(\omega) [\{N_0,N_1,\ldots,N_k\}],$ then there exists $i\leq L-N_k$  with $u=\omega_{i+N_0}\omega_{i+N_1}\cdots\omega_{i+N_k}.$  Since $\omega$ is recurrent, there exists a positive integer$N_{k+1}>N_k$ such that $\omega_i=\omega_{i+N_{k+1}}$ for each $i\leq L.$
We now verify that $N_0<N_1<\cdots <N_{k+1}$ satisfies the required property. So assume $j\leq k+1$ and
$u_0u_1\cdots u_j \in \overline{O}(\omega) [\{N_0,N_1,\ldots,N_j\}].$ We must show that
$u_0u_1\cdots u_j^{k+1-j+1}\in \overline{O}(\omega) [\{N_0,N_1,\ldots,N_{k+1}\}].$ This is clear in case  $j=k+1,$ thus we can assume $j\leq k.$ Then by induction hypothesis we have that
$u= u_0u_1\cdots u_j^{k-j+1}\in \overline{O}(\omega) [\{N_0,N_1,\ldots,N_k\}].$ Fix $i\leq L-N_k$ such that $u=\omega_{i+N_0}\omega_{i+N_1}\cdots\omega_{i+N_k}.$ Then

\[\begin{array}{ll}
u_0u_1\cdots u_j^{k+1-j+1}&=u_0u_1\cdots u_j^{k-j+1}u_j\\{}&=\omega_{i+N_0}\omega_{i+N_1}\cdots\omega_{i+N_k}\omega_{i+N_k}\\{}&=\omega_{i+N_0}\omega_{i+N_1}\cdots\omega_{i+N_k}\omega_{i+N_{k+1}}.
\end{array}\]
Hence $u_0u_1\cdots u_j^{k+1-j+1}\in \overline{O}(\omega) [\{N_0,N_1,\ldots,N_{k+1}\}]$
as required.
\end{proof}

\noindent It is readily verified that:

\begin{lemma}\label{sub*} Let $\NN=\{N_0<N_1<N_2<\cdots\}\subset\N$ be an infinite set satisfying the condition (*) above, and let $\NN'$ be any infinite subset of $\NN.$ Then $\NN'$ also satisfies (*).
\end{lemma}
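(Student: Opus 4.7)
The plan is to unpack what condition (*) demands for $\NN'$, and then directly reduce it to a single instance of (*) for the larger set $\NN$. Since $\NN'\subset\NN$, write $\NN'=\{N_{m_0}<N_{m_1}<N_{m_2}<\cdots\}$ with $m_0<m_1<m_2<\cdots$ a strictly increasing sequence of indices into $\NN$. Verifying (*) for $\NN'$ amounts to showing that for every $i\ge 0$ and every $k\ge 0$ there exists $\eta\in\overline{O}(\omega)$ satisfying $\eta(N_{m_j})=\omega_{i+N_{m_j}}$ for $j\le k$ and $\eta(N_{m_j})=\omega_{i+N_{m_k}}$ for $j>k$.

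The key observation is that applying (*) for $\NN$ with the same $i$ but with $k$ replaced by the larger integer $m_k$ produces an element $\eta\in\overline{O}(\omega)$ with
\[
\eta(N_\ell)=\omega_{i+N_\ell}\ \text{for}\ \ell\le m_k,\qquad \eta(N_\ell)=\omega_{i+N_{m_k}}\ \text{for}\ \ell>m_k.
\]
I would then read this $\eta$ along the subsequence of indices $m_0<m_1<\cdots$: for $j\le k$ we have $m_j\le m_k$, so $\eta(N_{m_j})=\omega_{i+N_{m_j}}$; for $j>k$ we have $m_j>m_k$, so $\eta(N_{m_j})=\omega_{i+N_{m_k}}$. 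This is precisely the sequence required by (*) for $\NN'$, so the same $\eta$ witnesses the condition.

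There is no real obstacle here — the argument is pure bookkeeping, and essentially says that the infinite-tail part of the sequence produced by (*) for $\NN$ at threshold $m_k$ absorbs all of the positions of $\NN'$ beyond $N'_k$, while the initial segment agrees with $\omega$ at the required positions automatically. Accordingly, I would present the proof in just a few lines: fix $i,k$, invoke (*) for $\NN$ at $(i,m_k)$, and read off the resulting $\eta$ along $\NN'$ to confirm the defining identity for $\NN'$.
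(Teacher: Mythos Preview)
Your argument is correct and is exactly the routine verification the paper has in mind; the paper itself gives no proof, merely noting the lemma is ``readily verified.'' Your choice to invoke (*) for $\NN$ at the index $m_k$ and then restrict the resulting $\eta$ to $\NN'$ is the natural and essentially unique way to do this bookkeeping.
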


\begin{proposition}\label{ram} Let $\Omega \subset \A^\nats$ be non-empty and let $\NN=\{N_0<N_1<N_2<\cdots\}\subset\N$ be an infinite set. Then for every positive integer $k,$ there exists an infinite subset $\NN'$ of $\NN$ (depending on $k)$ such that for any two finite subsets $P$ and $Q$ of $\NN'$ with $1\leq |P|=|Q|\leq k,$  we have\[\Omega[P]=\Omega[Q].\]
\end{proposition}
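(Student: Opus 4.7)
The plan is to deduce this from the infinite Ramsey theorem applied iteratively for each cardinality $j=1,2,\ldots,k$. The crucial observation is that, because $\A$ is finite, for any fixed $j$ the set $\A^j$ is finite and hence its power set $2^{\A^j}$ is finite. Thus the map $c_j$ sending a $j$-element subset $P=\{p_1<\cdots<p_j\}\subset\NN$ to the set $\Omega[P]\subset\A^j$ is a finite coloring of $[\NN]^j$, the collection of $j$-element subsets of $\NN$.

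First I would dispatch the case of a single $j$. By the infinite Ramsey theorem, given any finite coloring of $[\NN]^j$ there is an infinite subset $\NN_j\subset \NN$ that is monochromatic with respect to $c_j$; that is, $\Omega[P]=\Omega[Q]$ whenever $P,Q\subset\NN_j$ with $|P|=|Q|=j$. Note this step does not use any structural assumption on $\Omega$ beyond being a subset of $\A^\N$: the sole content is that $2^{\A^j}$ is finite.

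Next I would iterate. Set $\NN^{(0)}=\NN$. Having constructed an infinite $\NN^{(j-1)}\subset\NN$, apply the previous paragraph to the restriction of $c_j$ to $[\NN^{(j-1)}]^j$ to obtain an infinite $\NN^{(j)}\subset\NN^{(j-1)}$ on which $c_j$ is constant. After $k$ steps, set $\NN'=\NN^{(k)}$. Since $\NN'\subset\NN^{(j)}$ for every $1\le j\le k$, the coloring $c_j$ is constant on $[\NN']^j$ for each such $j$. Consequently, for any two finite $P,Q\subset\NN'$ with $1\le|P|=|Q|=j\le k$ we have $\Omega[P]=\Omega[Q]$, which is precisely the conclusion.

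There is no real obstacle; the only point that requires a moment's thought is the finiteness of the color set, which reduces to the finiteness of $\A$ and of each $\A^j$. The statement is essentially a packaging of Ramsey's infinitary theorem in the language tailored to this paper's Abelian pattern-complexity setup.
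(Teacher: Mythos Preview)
Your proposal is correct and follows essentially the same route as the paper: both iterate the infinite Ramsey theorem, one cardinality at a time, to produce a nested sequence of infinite subsets on which the coloring $P\mapsto\Omega[P]$ is constant for each size $j\le k$. Your write-up is in fact slightly more careful in two respects: you start the iteration at $j=1$ (the paper begins at $j=2$ with $\NN_1=\NN$, tacitly assuming the singleton case), and you make explicit that the finiteness of the color set follows from the finiteness of $2^{\A^j}$.
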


\begin{proof} We will recursively construct a sequence of nested infinite patterns
\[\NN'=\NN_k\subset \cdots \subset \NN_2 \subset \NN_1= \NN\]
such that for each $1\leq i \leq k$ we have
\[\Omega[P]=\Omega[Q]\]
for all finite subsets $P$ and $Q$ of $\NN_i$ with $1\leq |P|=|Q|\leq i.$ 

We begin with $\NN_2.$ Given two finite sub-patterns $P$ and $Q$ of $\NN_1$ with $|P|=|Q|=2,$ we write
\[P\sim _2 Q \iff \Omega[P]=\Omega[Q]. \] 
Then $\sim_2$ defines an equivalence relation on the set of all sub-patterns of $\NN_1$ of size $2,$ and hence naturally defines a finite coloring on the set of all size $2$ sub-patterns of $\NN_1,$ or equivalently on the set of all $2$-element subsets of the natural numbers $\nats,$ where two patterns $P$ and $Q$ are monochromatic if and only if $P\sim_2Q.$
We now recall the following well known theorem of Ramsey:

\begin{thm}[\cite{R},~Ramsey]] Let $k$ be a positive integer. Then given any finite coloring of  the set of all $k$-element subset of $\nats,$ there exists an infinite set $\mathcal A\subset \nats$ such that any two $k$-element subsets of $\mathcal A$ are monochromatic.
\end{thm}

Thus applying the above theorem we deduce that there exists an infinite
pattern $\NN_2\subset \NN_1$ such that any two sub-patterns $P$ and $Q$ of $\NN_2$ of size $2$ are $\sim_2$ equivalent.

Having constructed $\NN_k\subset \NN_{k-1}\subset \cdots \subset \NN_2\subset \NN_1=\NN$ with the required properties, we next construct
$\NN_{k+1}$ as follows:  Given any two sub-patterns $P$ and $Q$ of $\NN_k$ of size $k+1,$ we write
\[P\sim_{k+1}Q \iff \Omega[P]=\Omega[Q].\]
Again this defines a finite coloring of the set of all size $k+1$ sub-patterns of
$\NN_k,$ or equivalently on the set of all $(k+1)-$element subsets of $\nats.$  Hence by Ramsey's theorem, we deduce that there exists an infinite
pattern $\NN_{k+1}\subset \NN_k$ such that
any two sub-patterns of $\NN_{k+1}$ of size $k+1$ are monochromatic, i.e., $\sim_{k+1}$ equivalent. Moreover, since $\NN_{k+1}\subset \NN_k,$ it follows that any two sub-patterns $P$ and $Q$  of $\NN_{k+1}$
of size $1\leq |P|=|Q|\leq k$ are $\sim _{|P|}$ equivalent.

\end{proof}

\begin{definition} Let $k\ge 2$. A nonempty set $\Omega\subset\A^\N$ is called a $k$-{\it superstationary} set if \[\Omega[S]=\Omega[S']\]  for any $S$ and $S'\in \Sigma_k(\N)$ (see \cite{G}). 
\end{definition}

As an immediate consequence of Proposition~\ref{ram} we have

\begin{corollary}\label{ss} Let $\Omega \subset \A^\nats$ be non-empty and let $\NN\subset\N$ be an infinite set. Then for every positive integer $k,$ there exists an infinite subset $\NN'$ such that $\Omega[\NN']$ is $k$-superstationary.
\end{corollary}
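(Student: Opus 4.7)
The plan is to apply Proposition~\ref{ram} directly and then translate its conclusion into the language of $k$-superstationarity via a simple change of variables.

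First, I would apply Proposition~\ref{ram} to $\Omega$ and $\NN$ with the given $k$ to obtain an infinite subset $\NN' \subset \NN$ such that $\Omega[P] = \Omega[Q]$ for any finite $P, Q \subset \NN'$ with $1 \leq |P| = |Q| \leq k$. In particular this holds for all $k$-element subsets of $\NN'$.

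Next, I would unpack the definition of $\Omega[\NN']$. Writing $\NN' = \{N'_0 < N'_1 < N'_2 < \cdots\}$, the key observation is that for any $\omega \in \Omega$ and any pattern $S = \{s_1 < \cdots < s_k\} \in \Sigma_k(\N)$, we have
\[
\omega[\NN'][S] \;=\; \omega(N'_{s_1})\omega(N'_{s_2})\cdots\omega(N'_{s_k}) \;=\; \omega[\{N'_{s_1}, \ldots, N'_{s_k}\}],
\]
and therefore $(\Omega[\NN'])[S] = \Omega[\{N'_{s_1}, \ldots, N'_{s_k}\}]$. Thus the $S$-factor set of $\Omega[\NN']$ is exactly the factor set of $\Omega$ on the $k$-element subset of $\NN'$ indexed by $S$.

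Finally, for any two patterns $S, S' \in \Sigma_k(\N)$, both $\{N'_{s_1},\ldots,N'_{s_k}\}$ and $\{N'_{s'_1},\ldots,N'_{s'_k}\}$ are $k$-element subsets of $\NN'$, so the conclusion of Proposition~\ref{ram} gives $\Omega[\{N'_{s_1},\ldots,N'_{s_k}\}] = \Omega[\{N'_{s'_1},\ldots,N'_{s'_k}\}]$, which by the identification above is exactly $(\Omega[\NN'])[S] = (\Omega[\NN'])[S']$. Hence $\Omega[\NN']$ is $k$-superstationary. There is no real obstacle here, since the Ramsey-theoretic content has already been absorbed by Proposition~\ref{ram}; the only thing to check is the bookkeeping that ensures a pattern $S$ applied to $\Omega[\NN']$ corresponds to a $k$-element subset of $\NN'$ applied to $\Omega$, which is immediate from the definitions.
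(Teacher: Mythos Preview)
Your proposal is correct and is exactly the argument the paper has in mind: the paper states the corollary as ``an immediate consequence of Proposition~\ref{ram}'' with no further proof, and what you have written is precisely the routine unpacking of that immediate consequence. The only content is the bookkeeping identity $(\Omega[\NN'])[S]=\Omega[\{N'_{s_1},\ldots,N'_{s_k}\}]$, which you verify correctly.
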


\begin{lemma}\label{conn} Let $\alpha \in \A^\nats$ be aperiodic by projection and let $\NN\subset \nats$ be any infinite set.  Put $\Omega:=\overline{O}(\alpha)[\NN].$ Then for any $\{i<j\}\subset\N$, the directed graph $(\A,E_{i,j})$ is 
strongly connected, where 
\[E_{i,j}=\{(\omega(i),\omega(j))\in\A\times\A;~\omega\in\Omega,\,\,\omega(i)\neq \omega(j)\}.\]
\end{lemma}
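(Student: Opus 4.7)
The plan is to argue by contradiction: suppose $(\A,E_{i,j})$ is not strongly connected, and deduce that $\1_B(\alpha)$ is eventually periodic for some $\emptyset\ne B\subsetneq \A$, contradicting aperiodicity by projection. The first step is to extract a ``trap''. If strong connectedness fails, there exist $a,b\in\A$ with no directed path from $a$ to $b$; let $B$ be the set of vertices reachable from $a$ (including $a$ itself). Then $\emptyset\ne B\subsetneq\A$, and by construction no edge of $E_{i,j}$ goes from $B$ into $\A\setminus B$. Unpacking the definition of $E_{i,j}$, this amounts to saying: no $\omega\in\Omega$ satisfies $\omega(i)\in B$ and $\omega(j)\notin B$.

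Next I would transfer this condition back to $\alpha$ itself. Any $\omega\in\Omega$ has the form $\beta[\NN]$ with $\beta\in\overline{O}(\alpha)$, and the two coordinates $\beta(N_i),\beta(N_j)$ depend only on a basic neighborhood of $\beta$ in the product topology. Approximating $\beta$ by shifts of $\alpha$, one sees that $(\beta(N_i),\beta(N_j))=(\alpha(n+N_i),\alpha(n+N_j))$ for some $n\in\N$. Hence the trap condition becomes: for every $n\in\N$, $\alpha(n+N_i)\in B$ implies $\alpha(n+N_j)\in B$.

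Setting $\gamma:=\1_B(\alpha)$ and $d:=N_j-N_i$, this reads $\gamma(m)\le\gamma(m+d)$ for every $m\ge N_i$. So along each of the $d$ arithmetic progressions with common difference $d$ (starting from its first term $\ge N_i$), $\gamma$ is a non-decreasing $\{0,1\}$-valued sequence and therefore eventually constant. Taking the maximum stabilization index over these $d$ progressions yields an $M$ with $\gamma(n+d)=\gamma(n)$ for every $n\ge M$, so $\1_B(\alpha)$ is eventually periodic with period $d$. Since $\emptyset\ne B\subsetneq\A$, this contradicts the aperiodicity-by-projection hypothesis on $\alpha$.

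The only slightly delicate step is extracting the trap $B$ from the failure of strong connectedness; once $B$ is in hand, everything reduces to a routine translation resting on nothing more than the fact that a non-decreasing binary sequence must stabilize. Notice also that no use of recurrence of $\alpha$ is needed, which is consistent with the hypotheses of the lemma.
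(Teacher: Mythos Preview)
Your argument is correct and follows the same overall arc as the paper's proof: assume the graph fails to be strongly connected, extract a nontrivial set $B\subsetneq\A$, and deduce that $\1_B(\alpha)$ is eventually periodic with period $d=N_j-N_i$, contradicting aperiodicity by projection.

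The packaging differs slightly. The paper works with the sets $A_l$ of letters occurring infinitely often in the residue class $l\pmod d$, observes that any two letters in the same $A_l$ lie in the same strongly connected component (since the edges of $E_{i,j}$ include the steps along the progression), and then takes $B$ to be one such component; this forces each $A_l$ to lie entirely inside or entirely outside $B$, so $\1_B(\alpha(n))$ eventually depends only on $n\bmod d$. Your route is more direct: instead of strongly connected components you take the forward-reachable set from a vertex, obtaining a set $B$ with no outgoing edge, and then the one-line monotonicity observation $\gamma(m)\le\gamma(m+d)$ immediately gives eventual periodicity. Your version avoids introducing the auxiliary sets $A_l$ and is a touch cleaner; the paper's version makes the residue-class structure more explicit. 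Both are equally valid.
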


\begin{proof}  Fix $\{i<j\}\subset\N$ and $\NN=\{N_0<N_1<\cdots\}$. For any $l=0,1,\cdots,N_j-N_i-1$, let $A_l$ be the set of $a\in\A$ such that $\alpha(n)=a$ holds for infinitely many $n\in\N$ with $n\equiv l\pmod{N_j-N_i}$. For any $a,b\in\A$, if $\{a,b\}\in A_l$ for some $l\in\{0,1,\cdots,N_j-N_i-1\}$, then $a,b$ are two way connected in the graph $(\A,E_{i,j})$. Hence for $a,b\in\A$, $a,b$ are two way connected in the graph $(\A,E_{i,j})$ if there exist $a_0,a_1,\cdots,a_k\in\A$ and $l_1,\cdots,l_k\in\{0,1,\cdots,N_j-N_i-1\}$ such that
(i)~$a_0=a$, $a_k=b$, and
(ii)~$\{a_{i-1},a_i\}\in A_{l_i}$ for any $i=1,\cdots,k$.

Suppose to the contrary that there exist $a,b\in\A$ such that $a$ and $b$ are not two way connected in the graph $(\A,E_{i,j})$. Let $A$ be the set of $a'\in\A$ such that $a,a'$ are two way connected in the graph $(\A,E_{i,j})$. Then, we have $\emptyset\ne A{\subset\atop{\ne}}\A$. Moreover, there exists $S$ with $\emptyset\ne S\subset\{0,1,\cdots,N_j-N_i-1\}$ such that $A_l\subset A$ for any $l\in S$ and $A_l\cap A=\emptyset$ for any $l\in \{0,1,\cdots,N_j-N_i-1\}\setminus S$. Therefore,
$$
1_A(\alpha(0))1_A(\alpha(1))1_A(\alpha(2))\cdots
$$ 
is periodic with period $N_j-N_i$, which contradicts our assumption that $\alpha$ is aperiodic by projection. Thus, the graph is strongly connected.

\end{proof}

\noindent Combining lemmas~\ref{*}, \ref{sub*} and \ref{conn} with Proposition~\ref{ram} we obtain:

\begin{proposition}\label{key}
Let $\omega\in\A^\N$ be recurrent and aperiodic by projection and $k\ge2$. Then there exists an infinite set $\NN\subset\N$ such that $\Omega:=\overline{O}(\alpha)[\NN]$ is a $k$-superstationary set and
\begin{enumerate}
\item For any $\omega\in\Omega$ and $i\in\N$, 
$$\omega(0)\omega(1)\cdots\omega(i-1)\omega(i)^\infty
\in\Omega$$
\item  For any $\{i<j\}\subset\N$, the directed graph $(\A,E_{i,j})$ is 
strongly connected.
\end{enumerate}

\end{proposition}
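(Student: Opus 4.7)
The plan is to chain together Lemma~\ref{*}, Corollary~\ref{ss}, Lemma~\ref{sub*}, and Lemma~\ref{conn} into a single construction. First I would invoke Lemma~\ref{*} on the recurrent word $\omega$ to obtain an infinite set $\NN_0=\{N_0<N_1<\cdots\}\subset\N$ satisfying condition $(*)$. I would then apply Corollary~\ref{ss} with $\Omega=\overline{O}(\omega)$ and the pattern $\NN_0$ to extract an infinite subset $\NN\subset\NN_0$ for which $\overline{O}(\omega)[\NN]$ is $k$-superstationary; this will be the claimed $\NN$, and I set $\Omega:=\overline{O}(\omega)[\NN]$. By Lemma~\ref{sub*}, since $\NN\subset\NN_0$, the set $\NN$ itself still satisfies $(*)$.

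Property (2) comes for free: since $\omega$ is aperiodic by projection and $\NN$ is infinite, Lemma~\ref{conn} gives that $(\A,E_{i,j})$ is strongly connected for every $\{i<j\}\subset\N$; this lemma makes no use of $k$-superstationarity, so no further refinement of $\NN$ is needed.

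For property (1), given $\omega'\in\Omega$ and $i\in\N$, I want $\omega'(0)\omega'(1)\cdots\omega'(i-1)\omega'(i)^\infty\in\Omega$. I would write $\omega'=\tilde\omega[\NN]$ for some $\tilde\omega\in\overline{O}(\omega)$ and approximate $\tilde\omega$ by a sequence of shifts $T^{i_n}\omega$. For $n$ large enough, the coordinates $\omega(i_n+N_0),\dots,\omega(i_n+N_i)$ agree with $\omega'(0),\dots,\omega'(i)$. Applying condition $(*)$ (for $\NN$) with shift parameter $i_n$ and truncation index $i$ produces an element of $\overline{O}(\omega)[\NN]$ whose $\NN$-coordinates equal $\omega(i_n+N_0)\cdots\omega(i_n+N_{i-1})\omega(i_n+N_i)^\infty$, which for large $n$ coincides with $\omega'(0)\cdots\omega'(i-1)\omega'(i)^\infty$, placing this word in $\Omega$.

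The main technical wrinkle is the passage from $(*)$, which provides the "eventually constant" words only as projections of actual shifts of $\omega$, to property (1), which must hold for every element of the closure $\overline{O}(\omega)[\NN]$. This is resolved by the continuity argument above, since the claim involves only the first $i+1$ coordinates of $\tilde\omega$ along $\NN$ and the closure is taken in the product topology.
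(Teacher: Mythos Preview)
Your proposal is correct and follows exactly the approach the paper indicates: it chains Lemma~\ref{*}, Lemma~\ref{sub*}, Proposition~\ref{ram} (via Corollary~\ref{ss}), and Lemma~\ref{conn} in the natural order, with the only extra work being the routine approximation argument passing from shifts of $\omega$ to arbitrary elements of $\overline{O}(\omega)$ in verifying property~(1). The paper itself leaves all of this implicit, so you have simply filled in the details.
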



\section{Main results}

\begin{proof}[Proof of Theorem~\ref{main}]
Fix a positive integer $k.$ 
By Proposition~\ref{key}, there exists an infinite set $\NN\subset\N$ such that
$\Omega=\overline{O}(\alpha)[\NN]\subset\A^\N$ is $k+1$-superstationary and satisfies conditions $(1)$ and $(2)$ of Proposition~\ref{key}. Since $\mabp_{\alpha}(k)\geq \mabp_{\Omega}(k),$ 
 it is sufficient to prove that $\#\,{\Omega_k}^{\mbox{ab}}\geq (r-1)k+1$, where $\Omega_k:=\Omega[\{0,1,\cdots,k-1\}]$. 

Let $(\A,E_{0,1})$ be the strongly directed graph where 
$$E_{0,1}=\{(\omega(0),\omega(1))\in\A\times\A\mbox{ with }\omega(0)\ne\omega(1);~\omega\in\Omega\}.$$
Then there exists a sequence $a_0a_1\cdots a_l$ of elements in $\A$ containing all elements in $\A$ such that $(a_i,a_{i+1})\in E_{0,1}~(i=0,1,\cdots,l-1)$. 

Define a non-directed graph $(\A,F)$ by 
$$F=\{\{a,b\}\subset\A\mbox{ with }a\ne b~\mbox{and either }a^kb^\infty\in\Omega
\mbox{ or }b^ka^\infty\in\Omega\}. $$

Since $\Omega$ is $k+1$-superstationary, for any $i=0,1,\cdots,l-1$, 
there exists $\omega\in\Omega$ such that $\omega[\{kr,kr+1\}]=a_ia_{i+1}$. 
Hence, by $(1)$ of Proposition~\ref{key}, there exists 
$\xi\in\A^{kr}$ such that $\xi a_ia_{i+1}^\infty\in\Omega$ 
and $\xi a_i^\infty\in\Omega$. Then, there exists $b\in\A$ occurring 
in $\xi$ at least $k$ times. Since $\Omega$ is $k+1$-superstationary, this implies that $b^ka_i$ and $b^ka_{i+1}$ are in $\Omega[\{0,1,\cdots,k\}]$. Therefore, $b^ka_i^\infty\in\Omega$ and $b^ka_{i+1}^\infty\in\Omega$ by $(1)$ of Proposition~\ref{key}. Hence, we have two cases according to whether $b\in\{a_i,a_{i+1}\}$ or not. \vspace{0.5em}\\
{\bf{Case 1:}}~$b\in\{a_i,a_{i+1}\}$. In this case, we have $\{a_i,a_{i+1}\}\in F$. \vspace{0.5em}\\
{\bf{Case 2:}}~$b\notin\{a_i,a_{i+1}\}$. In this case, we have $2$ edges $\{b,a_i\}$ and $\{b,a_{i+1}\}$ in $F$, by which $a_i$ and $a_{i+1}$ are connected. \vspace{-0.8em}\\

Thus, we have a connected graph $(\A,F)$. This implies there are 
at least $r-1$ edges. If $\{a,b\}\in F$, then either $a^kb^\infty\in\Omega$ 
or $b^ka^\infty\in\Omega$. Since $\Omega$ is $k$-superstationary, either $a^hb^{k-h}\in\Omega_k~(h=0,1,\cdots,k)$ or 
$b^ha^{k-h}\in\Omega_k~(h=0,1,\cdots,k)$. Any case, there are $k+1$ 
elements in ${\Omega_k}^{\mbox{ab}}$ consisting only of $a$ and $b$. 

Since $\#F\ge r-1$, there are at least $(r-1)(k+1)-(r-2)=(r-1)k+1$ 
elements in ${\Omega_k}^{\mbox{ab}}$ consisting only of 2 elements, 
where we subtract $r-2$ since the number of overlapping counted for 
constant words is $2(r-1)-r=r-2$. 

Thus, $\#\,\Omega_k^{\mbox{ab}}\ge (r-1)k+1$. \vspace{-0.8em}\\

If $\#\A=2$, then $\mabp_{\alpha}(k)
\le k+1~~(k=1,2,\cdots)$ for any $\alpha\in\A^\N$, since the number of 
vectors $(|\xi|_0,|\xi|_1)$ over all $\xi\in\D^k$ is $k+1$. 

Let $\A=\{0,1,\cdots,r-1\}$ with $r\ge 3$. For $n\ge 1$, 
let $\tau(n)$ be the maximum $\tau\in\N$ such that $2^\tau$ is a factor of $n$. Define $\alpha\in\A^\N$ by $\alpha(n)=\tau(n+1)\pmod{r}$. Then $\alpha$ is one of the Toeplitz words defined in Introduction. It is clearly recurrent and aperiodic by projection. 

Take any $2$-pattern $S=\{s<t\}\subset\N$. Let $d=\tau(t-s)$. Then there 
exists $u\in\N$ with $0\le u<2^d$ such that either 
$\tau(s-u)=d,~ \tau(t-u)>d$ or $\tau(s-u)>d,~ \tau(t-u)=d$. Assume without loss of generality that the latter holds. Let $c\in\A$ be such that $c\equiv d\pmod{r}$ and denote by $\E_a\in\R^\A$ the unit vector at $a\in\A$. There are 3 cases for $n\in\Z$. 
\vspace{0.5em}\\
{\bf{Case 1:}}~$\tau(n+u+1)>d.$~~In this case, $\tau(n+t+1)=d$ holds. 
Hence, $\Psi(\alpha[n+S])=\E_a+\E_c$ for some $a\in\A$.
\vspace{0.5em}\\
{\bf{Case 2:}}~~$\tau(n+u+1)=d.$~~In this case, $\tau(n+s+1)=d$ holds. 
Hence, $\Psi(\alpha[n+S])=\E_a+\E_c$ for some $a\in\A$.
\vspace{0.5em}\\
{\bf{Case 3:}}~~$\tau(n+u+1)<d.$~~In this case, 
$\tau(n+s+1)=\tau(n+t+1)<d$. Hence, $\Psi\alpha[n+S])=2\E_a$ 
for some $a\in\A$. \vspace{-0.8em}\\

Therefore, 
$$\{\Psi(\alpha[n+S])\,:\,n\in\N\}\subset
\{\E_a+\E_c;~a\in\A\}\cup\{2\E_a;~a\in\A\},$$
and hence, $\mabp_{\alpha}(2)\le 2r-1$. Thus, 
$\mabp_{\alpha}(2)=2r-1$ since we already have 
$\mabp_{\alpha}(2)\ge 2r-1$. Note that this proof remains true for any of the general Toeplitz words defined in the Introduction. 
\end{proof}

\begin{remark}{\rm Theorem~\ref{main} is not true without the assumption of 
recurrency. In fact, let $\alpha=10^{3}10^{3^2}10^{3^3}1\cdots\in\D^\N$. 
Then, $\mabp_{\alpha}(3)=3$. To see this, suppose
$\alpha[n+S]=111$  for some $n\in\N,$ and some $3$-pattern $S=\{i<j<k\}.$  Then 
 $j-i=3^b-3^a$ and $k-j=3^c-3^b$ for some positive 
integers $a<b<c$. Moreover, this happens when $n=3^a-i$. 

Suppose $\alpha[m+S]=110$ for some $m.$  Then since there exists positive integers 
$d<e$ such that $m+i=3^d$ and $m+j=3^e,$ we have 
$j-i=(m+j)-(m+i)=3^e-3^d=3^b-3^a$. This implies that $3^e+3^a=3^b+3^d$, 
concluding $e=b$ and $a=d$ by the uniqueness of $3$-adic representation. 
Hence, $m=3^d-i=3^a-i,$  a contradiction. 

If $\alpha[m+S]=101$ for some $m,$  then since there exists positive integers 
$d<e$ such that $m+i=3^d$ and $m+k=3^e,$ we have 
$k-i=(m+k)-(m+i)=3^e-3^d=3^c-3^a$. This implies that $3^e+3^a=3^c+3^d$, 
concluding $e=c$ and $a=d$ by the uniquness of 3-adic representation. 
Hence, $m=3^d-i=3^a-i,$ a contradiction.

Finally, if $\alpha[m+S]=011$ for some $m,$ then since there exists positive integers 
$d<e$ such that $m+j=3^d$ and $m+k=3^e,$ we have 
$k-j=(m+k)-(m+j)=3^e-3^d=3^c-3^b$. This implies that $3^e+3^b=3^c+3^d$, 
concluding $e=c$ and $b=d$ by the uniquness of 3-adic representation. 
Hence, $m=3^d-j=3^b-j=3^a-i,$ again a contradiction.

Thus if $111\in\{\alpha[n+S];~n\in\N\}$ then 
$\{110,101,011\}\cap\{\alpha[n+S];~n\in\N\}=\emptyset$. 
Thus, $\mabp_{\alpha}(3)\leq 3$. Since it is clear that 
$\mabp_{\alpha}(3)\geq 3$, we have $\mabp_{\alpha}(3)=3$. 
}\end{remark}

\begin{remark}{\rm 
We do not know whether there exist $\#\A=r\ge 3$ and $\alpha\in\A^\N$ which is recurrent and aperiodic by projection and  such that
$$
\mabp_{\alpha}(k)=(r-1)k+1~~(k=1,2,\cdots).
$$
Let $\A=\{0,1,\cdots,r-1\}$ and $\Omega=\cup_{i=0}^{r-2}\{i,i+1\}^\N$. Then, it is readily verified that $\mabp _{\Omega}(k)=(r-1)k+1~~(k=1,2,\cdots)$. But $\Omega$ is not equal to $\overline{O}(\alpha)$  for any choice of $\alpha\in\A^\N$. 
}\end{remark}

\begin{thm}\label{rot}
Let $\alpha\in\A^N$ be a rotation word with $\#\A=r$. Then, we have 
$p_\alpha^{*\mbox{\rm ab}}(k)=rk~(k=1,2,\cdots)$. 
\end{thm}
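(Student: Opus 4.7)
The plan is to prove the matching bounds $\mabp_{\alpha}(k) \le rk$ and $\mabp_{\alpha}(k) \ge rk$; the argument below applies when $r \ge 3$ (the case $r=2$ is already handled by Theorem~\ref{main}, with a different value).

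For the upper bound, I extend $\alpha$ to a step function $\tilde\alpha\colon\mathbb{T}\to\A$ on the circle $\mathbb{T}=\R/\Z$ by declaring $\tilde\alpha(y)=i$ whenever $y\in[c_i,c_{i+1})\pmod 1$, so $\alpha(n)=\tilde\alpha(n\theta)$. Fix a $k$-pattern $S=\{s_1<\cdots<s_k\}$ and set $x_j=s_j\theta\pmod 1$; then $\alpha[n+S]=w(n\theta)$ where $w(y):=(\tilde\alpha(y+x_j))_{j=1}^k$. The function $w\colon\mathbb{T}\to\A^k$ is constant on each of the at most $rk$ arcs cut out of $\mathbb{T}$ by the critical points $\{c_i-x_j\pmod 1 : 0\le i<r,\,1\le j\le k\}$, so $p_\alpha(S)\le rk$ and hence $\mabp_{\alpha}(k)\le p_\alpha^*(k)\le rk$.

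For the lower bound I construct a pattern $S$ that realizes $rk$ distinct Abelian classes. Set $d:=\min_{0\le i<r}(c_{i+1}-c_i)>0$. Using density of $\{n\theta\pmod 1 :n\in\N\}$ in $\mathbb{T}$, pick $s_1<\cdots<s_k$ so that all the points $x_j=s_j\theta\pmod 1$ lie in a single arc of length $<d$; the $x_j$ are automatically distinct by irrationality of $\theta$. Relabel them so that $x_1<\cdots<x_k$ in cyclic order within this arc. Because $|x_j-x_{j'}|<d\le c_{i+1}-c_i$ for every $i$, the $rk$ critical values $c_i-x_j$ are pairwise distinct on $\mathbb{T}$ and form $r$ disjoint ``clusters'' (one near each $c_i$) arranged cyclically in the order of $c_0,c_1,\ldots,c_{r-1}$. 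As $y$ traces $\mathbb{T}$, between the clusters near $c_{i-1}$ and $c_i$ every position reads letter $i-1$, giving Abelian vector $k\mathbf{e}_{i-1}$; while crossing cluster $c_i$, the successive crossings at $c_i-x_k, c_i-x_{k-1},\ldots, c_i-x_1$ flip the positions one by one from letter $i-1$ to letter $i$, so after $b$ crossings the Abelian vector becomes $V_{i,b}:=(k-b)\mathbf{e}_{i-1\bmod r}+b\mathbf{e}_i$.

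It remains to verify that the $rk$ vectors $\{V_{i,b}:0\le i<r,\,0\le b<k\}$ are pairwise distinct when $r\ge 3$. Those with $b=0$ are the pure vectors $k\mathbf{e}_{i-1}$, distinguished by their singleton supports; those with $0<b<k$ have two-element support $\{i-1,i\}\bmod r$, and for $r\ge 3$ different $i$ yield different cyclically-consecutive pairs (for $r=2$ the pair $\{0,1\}$ occurs for both $i=0$ and $i=1$, which is exactly why the theorem requires $r\ge 3$). Density of $\{n\theta\}$ guarantees that each of the $rk$ arcs contains some $n\theta$ with $n\in\N$, so all $rk$ Abelian classes are realized among $\{\alpha[n+S]:n\in\N\}$, yielding $p_\alpha^{\mbox{ab}}(S)=rk$ and thus $\mabp_{\alpha}(k)\ge rk$. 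I expect the main conceptual step to be the choice of $S$ with $x_j$'s concentrated in a small arc, since this is what isolates the clusters and makes the $V_{i,b}$ manifestly distinct; the rest is a routine combinatorial verification.
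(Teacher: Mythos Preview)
Your proof is correct and takes essentially the same route as the paper: for the lower bound you both choose a pattern whose points $s_j\theta\pmod 1$ cluster in an arc shorter than $\min_i(c_{i+1}-c_i)$ (the paper does this by taking $N_0<N_1<\cdots$ with $\{N_j\theta\}$ strictly decreasing to $0$), so that the $S$-factors have Parikh vectors $(k-b)\mathbf e_{i-1}+b\mathbf e_i$, which are pairwise distinct for $r\ge 3$. The only differences are cosmetic: the paper obtains the upper bound by citing $p_\alpha^*(k)=rk$ from \cite{L} rather than reproving it via your step-function partition, and you spell out the Abelian-distinctness check (and the necessity of $r\ge 3$) that the paper leaves implicit.
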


\begin{remark}{\rm 
For a rotation word $\alpha\in\A^N$ with $\#\A=r$, it is known \cite{L} that $p_\alpha^*(k)=rk~(k=1,2,\cdots)$. Hence, Theorem~\ref{rot} shows that the abelianization does not decrease the complexity in the case of rotation words on  more than $2$ letters. 
}\end{remark}

\begin{proof}[Proof of Theorem~\ref{rot}]
Since $\mabp_{\alpha}(k)\le
p_\alpha^*(k)=rk~(k=1,2,\cdots)$, it is sufficient to prove that 
$\mabp_{\alpha}(k)\ge rk~(k=1,2,\cdots)$. Let $\theta$ is an irrational number and $c_0<c_1<\cdots<c_{r-1}<c_r$ be real numbers with $c_r=c_0+1$. Let $\A=\{0,1,\cdots,r-1\}$. We may assume that $\alpha\in\A^\N$ is such that $\alpha(n)=i$ whenever $n\theta\in[c_i,c_{i+1})\pmod{1}$

Fix  $0<\varepsilon<\min_i(c_{i+1}-c_i).$ Set 
$\NN=\{N_0<N_1<\cdots\}\subset\N$ such that 
\[\varepsilon>\{N_0\theta\}>\{N_1\theta\}>\cdots>0\] and 
$\lim_{n\to\infty}\{N_n\theta\}=0.$ Here  
$\{~\}$ denotes the fractional part. Then, it is easy to see that 
$$\{\alpha[n+\NN]\,;\,n\in\N\}=\bigcup_{i=0}^{r-1}\{(i+1)^ni^\infty\,;\,n\in\N\},$$
where we identify $r$ with $0$ as letters. 
Thus, for any $k=1,2,\cdots$, we have 
$$\{\alpha[n+\NN_k]\,;\,n\in\N\}=\bigcup_{i=0}^{r-1}
\{(i+1)^ni^{k-n}\,;\,0\le n\le k\},$$
where $\NN_k=\{N_0<N_1<\cdots<N_{k-1}\}$. There are exactly $rk$ words 
as above. Thus, $\mabp_{\alpha}(k)\ge rk~(k=1,2,\cdots)$, which 
completes the proof.
\end{proof}

\end{document}